\DeclareFontFamily{U}{mathx}{}
\DeclareFontShape{U}{mathx}{m}{n}{<-> mathx10}{}
\DeclareSymbolFont{mathx}{U}{mathx}{m}{n}
\DeclareMathAccent{\widehat}{0}{mathx}{"70}
\DeclareMathAccent{\widecheck}{0}{mathx}{"71}
\def\@settitle{\begin{center}
		\baselineskip14\p@\relax
		\bfseries
		\LARGE
		\@title
	\end{center}
}
\newtheorem{theorem}{Theorem}[section]
\newtheorem{lemma}[theorem]{Lemma}
\newtheorem{definition}[theorem]{Definition}
\theoremstyle{definition}
\newtheorem{remark}{Remark}
\renewcommand{\leq}{\leqslant}
\newcommand{\R}{\mathbb{R}}
\newcommand{\N}{\mathbb{N}}
\newcommand{\Z}{\mathbb{Z}}
\newcommand{\F}{\mathcal{F}}
\newcommand{\K}{\mathcal{K}}
\newcommand{\codim}{\mathrm{codim }}
\newcommand{\const}{\mathrm{const}}
\newcommand{\CC}{\mathbb{C}}
\DeclareMathOperator{\Ker}{Ker}
\DeclareMathOperator{\Span}{Span}
\newenvironment{enumerate*}%
  {\begin{enumerate}%
    \setlength{\itemsep}{1pt}%
    \setlength{\parskip}{1pt}}%
  {\end{enumerate}}
\newcommand*{\@old@slash}{}\let\@old@slash\slash
\def\slash{\relax\ifmmode\delimiter"502F30E\mathopen{}\else\@old@slash\fi}
\title{Hereditary completeness for systems of exponentials in weighted $L^2$-spaces}
\author{Andrei V. Semenov}
\thanks{Theorem 1 was proved with the support of the Russian Science Foundation grant 24-11-00087. }
\keywords{Hereditary completeness, complex analysis, time-frequency analysis}
\address{Andrei V. Semenov
Saint Petersburg State University\\
Department of Mathematics and Computer Science\\
Russia, 199178, Saint Petersburg, 14th Line of Vasilievsky Island, 29}
\email{asemenov.spb.56@gmail.com}
\begin{document}
\maketitle
\begin{abstract}
We prove that for any weight $w$, which has at most polynomial decay at the endpoints of the interval, there exists a complete and minimal system $\{e^{i\lambda_nt}\}_{n\in \N}$ of exponentials in the weighted space $L^2(w)$ which is not hereditarily complete. 
\end{abstract}

\section{Introduction}
A system $\{v_n\}_{n \in \N}$ in a separable Hilbert space $H$ with inner product $(-,-)$ is {\it complete} if $\overline{\Span{\{v_n\}_{n \in \N}} }= H$. It is {\it minimal} if $\overline{\Span{ \{v_n\}_{n \ne m}}} \ne H$ for any $m\in \N$. For any complete and minimal system (or {\it exact} system) $\{v_n\}_{n \in \N}$ there exists a unique biorthogonal system $\{w_n\}_{n \in \N}$ such that $(v_n, w_m) =\delta_{n,m}$. This allows one to formally associate a Fourier-type series to any element of the space:
$$x \sim \sum_{n \in \N} (x, w_n)v_n \quad \text{for any } x \in H.$$
However, the topological properties of such expansions are often highly irregular. A natural question arises: under which minimal conditions can an element be reasonably reconstructed from its formal Fourier coefficients?

This reconstruction problem is strongly connected to the concept of spectral synthesis. Spectral synthesis asks whether every invariant subspace of a linear operator can be reconstructed from the root vectors it contains. This problem was effectively linked to the completeness of biorthogonal expansions by Markus (see \cite{M}), who formulated the invariant subspace problem for compact operators with a point spectrum. For further details on the profound connection between these two notions, we refer the reader to \cite{BBB1, BBB2, BBK, M}.

In the language of vectors in Hilbert spaces, spectral synthesis is equivalent to the notion of \textit{hereditary completeness}. Hereditary completeness represents the weakest topological property that still permits a meaningful reconstruction of vectors from their Fourier series (see \cite{BB} and the introduction of \cite{BBB} for an extended discussion). We shall use the following nonstandard definition of this property (see \cite{DN}):

\begin{definition}
The exact system $\{v_n\}_{n \in \N} \subseteq H$ is {\it hereditarily complete} if for any partition $\N = A \cup B$ such that $A \cap B = \emptyset$, the mixed system $\{v_n\}_{n\in A} \cup \{w_n\}_{n\in B}$ is complete in $H$, where $\{w_n\}_{n\in \N}$ is the biorthogonal system to $\{v_n\}_{n \in \N}$.
\end{definition}

Set $H = L^2(-\pi, \pi)$. We say that $\{v_n\}$ is a {\it system of exponentials} if $v_n = e^{i \lambda_n t}$ for some sequence $\{\lambda_n\}_{n \in \N}$. Such systems are of profound interest not only in classical Fourier analysis but also in modern frame theory (see \cite{OU}) and the Beurling--Malliavin theory (see \cite{MP1, MP2}). For a long time, it remained an open question whether every exact system of exponentials automatically admits spectral synthesis. 

This question was resolved in the negative by Baranov, Belov, and Borichev (see Theorem 1.3 in \cite{BBB2}): they discovered that there exist exact systems of exponentials in $L^2(-\pi, \pi)$ that fail to be hereditarily complete. Moreover, they have shown that the completeness defect of any mixed system is at most one-dimensional, i.e., for any partition $\N = A \cup B$ and any complete and minimal system of exponentials $\{v_n\}_{n \in \N}$ we have
$$\codim \text{ } \overline{\Span \left( \{v_n\}_{n\in A} \cup \{w_n\}_{n\in B} \right) } \leq 1,$$
where $\{w_n\}$ is the biorthogonal system (see Theorem 1.1 in \cite{BBB2}). This unexpected phenomenon has since inspired a series of generalizations, including extensions to de Branges spaces \cite{BBB1}, Gabor analysis \cite{BBB}, and recent results by Zikkos and Gunatillake \cite{ZG}.

It is natural to ask whether the failure of spectral synthesis is a unique pathology of the uniform measure on an interval. Does this phenomenon persist under singular perturbations of the measure? In various applications, such as the generalization of the Paley--Wiener theorem (see \cite{LYu}) and the study of unconditional exponential bases (see \cite{Is}), one considers weighted spaces $L^2(w)$ where the weight $w$ exhibits polynomial decay at the endpoints of the interval. We investigate the hereditary completeness of exponential systems in the space
\begin{equation}\label{eq:defL}
L^2(w) = \left\{ f : (-\pi,\pi) \to \CC \colon\int_{-\pi}^{\pi} {{|f(t)|^2} \over {w(t)}} dt < \infty \right\},
\end{equation}
where $w$ has at most polynomial decay near $\pm \pi$.\par

\section{Main result}

Consider the space $L^2(w)$ from \eqref{eq:defL} and define the norm and the scalar product in the space as follows:
$$\|f\|_w^2 := \int_{-\pi}^{\pi}   {{|f(t)|^2} \over {w(t)}} dt  \quad \text{ and } \quad (f,g)_w := \int_{-\pi}^{\pi} {{f(t)} \over {w(t)}}  \overline{g(t)} dt.$$

Consider the weight $w_{\alpha}(t) = (\pi-|t|)^{\alpha}$ for some fixed $0< \alpha < 1$ and set $h_{\alpha} = \ln \sqrt{1/w_{\alpha}}$. For a given function $h$ defined on  $(-\pi,\pi)$, we define $\tilde{h}$ as its modified Legendre transform
\begin{equation} \label{legendre}
\tilde{h}(x) = \sup_{t \in (-\pi,\pi)} (-xt - h(t)) = -\inf_{t \in (-\pi,\pi)} (xt + h(t)), \quad x \in \R.
\end{equation}

We now introduce a class of nicely behaved weights, which has at most polynomial growth at the endpoints of the interval. We apply this class to the density $1/w$.

\begin{definition}\label{def:Ka} Denote by $\K_{\alpha}$ the class of even continuous functions $W : (-\pi,\pi) \rightarrow (0,+\infty)$ such that:
\begin{enumerate} 
\item ${{1} \over {W}} \in L^1(-\pi,\pi)$;
\item the function $h = \ln \sqrt{W}$ is convex and $|\tilde{h}''(t)| \leq C |\tilde{h}_{\alpha}''(t)|$ on $(-\pi,\pi)$;
\item we have $W(t) \to \infty$ for $t \to \pm \pi$ and $W(t) \leq C {{1} \over {w_{\alpha}(t)}}$ in a small neighborhood of $\pm \pi$.
\end{enumerate}
\end{definition}
Now define the class of weights $\K$ by the formula
$$\K = \{w : (-\pi,\pi) \longrightarrow (0,+\infty) \mid w \in \K_{\alpha} \text{ for some } 0< \alpha < 1\}.$$

\begin{remark}
The choice of the class $\mathcal{K}$ is natural for the following reasons:
\begin{enumerate}
    \item The requirement $e^{i\lambda_n t}\in L^2(w)$ immediately forces $1/w\in L^1(-\pi,\pi)$.
    \item For the standard system $\{e^{int}\}_{n\in\N}$ the biorthogonal system is $\{e^{int}w(t)\}_n$. For this biorthogonal system to lie in $L^2(w)$ one must also require $w\in L^1(-\pi,\pi)$. The simultaneous requirements $1/w\in L^1$ and $w\in L^1$, combined with the desired singularity $1/w(t)\to\infty$ at the endpoints, naturally single out the prototype density $1/w_\alpha(t)=(\pi-|t|)^{-\alpha}$, where $0<\alpha<1$.
    \item Condition (2) regarding the convexity of $h = \ln \sqrt{1/w}$ is standard for problems of this type. It is crucial for establishing the norm isomorphism between $L^2(w)$ and the reproducing kernel Fock space of entire functions $\F_{1/w}$ (see \cite{LYu}).
    \item Condition (3) allows us to distinguish $L^2(w)$ from the unweighted $L^2(-\pi,\pi)$. Simultaneously, the upper bound $W \leq C/w_\alpha$ guarantees that the singularity of $W$ is no worse than that of the prototype $(\pi-|t|)^{-\alpha}$.\end{enumerate}
The assumption of continuity of $w$ is added for convenience and can be replaced by $w\in L^1(-\pi,\pi)$.
\end{remark}

 Let us state our main result. \par
$\!$\par
{\bf Theorem 1}. {\it For any weight $w$ such that $1/w \in \mathcal{K}$, there exists $\Lambda \subseteq \R$ such that the system $\{e^{i\lambda t}\}_{\lambda \in \Lambda} $ is complete and minimal but not hereditarily complete in $L^2(w)$.}\par
$\!$\par
Furthermore, our technique allows us to conjecture that a generalization of Theorem 1.1 from \cite{BBB2} holds for $L^2(w)$. For any $1/w \in \mathcal{K}$ and any complete and minimal system of exponentials $\{v_n\}_{n\in \N}$ in $L^2(w)$, let $\{w_n\}_{n \in \N}$ be its biorthogonal system. Consider a partition $\N = A \cup B$ such that $A \cap B = \emptyset$. We can now formulate the following question: \par
{\bf Question 1}. {\it Is it true that the completeness defect of the mixed system $\{v_n\}_{n \in A} \cup \{w_n\}_{n \in B}$ is  finite-dimensional?}\par
The paper is organized as follows: In Section \ref{sect:Prelim}, we reformulate the problem in the language of reproducing kernel Hilbert spaces of entire functions. In Section \ref{sect:Proof}, we prove Theorem 1. Finally, in Section \ref{sect:Tech}, we prove the technical lemmas to be used in Section \ref{sect:Proof}.\par

\subsection{Notations}
By $U(z)\lesssim V(z)$, we mean that there exists a constant $C>0$ such that $U(z) \leq CV (z)$ for all $z$ in the given context. We write $U(z) \asymp V(z)$ if $U(z)\lesssim V(z)$ and $V(z)\lesssim U(z)$ simultaneously.\par
By $\mu_2$, we denote the standard planar measure on $\CC$. The symbol $\Re z$ stands for the real part of $z \in \CC$, while the symbol $\Im z$ stands for the imaginary part of $z$. Let $B_r(z)$ be the ball of radius $r$ centered at $z$. By $\Ker F$, we denote the zero set of the analytic function $F$.

\section{Preliminary results}\label{sect:Prelim}

Throughout this section, we fix $w \in \K$. From now on, let $\{e^{i\lambda_n t}\}_{n\in \N}$ be a complete and minimal system in $L^2(w)$. We normalize the Fourier transform as follows:
$$f \mapsto \widehat{f}(z) = \displaystyle \int_{\R} f(t) e^{- i tz } dt.$$
For a weight $v$, let $\F_v$ be the image of $L^2(v)$ under the Fourier transform, and define $\|f\|_*=\|\widecheck f\|_v$, $(f,g)_*=(\widecheck f,\widecheck g)_v$, where $\widecheck{f}$ is the usual image of a function $f$ under the inverse Fourier transform. We use this notations later with $v=1/w$.

\begin{definition} One can define a {\it reproducing kernel} in a separable Hilbert space of entire functions $H$ as a functional $k_z \in H^*$ such that
$$(g, k_z) = g(z) \quad \text{ for any } g \in H$$
for some $z \in \CC$. By abuse of notation, we write $k_z \in H$.
\end{definition}

\begin{lemma} \label{lem:isomker}
The system $\left\{\widehat{{{e^{i \lambda_m t }} \over {w(t)}}}\right\}_{m \in \N}$ is a system of reproducing kernels in the space $\F_{1/w}$ with respect to the norm $\|-\|_*$.
\end{lemma}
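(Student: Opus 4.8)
The plan is to verify the defining property of reproducing kernels directly for the proposed family, and to check that each $\widehat{e^{i\lambda_n t}w(t)}$ actually belongs to $\F_w$.

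First I would unwind the definitions. A generic element of $\F_w$ has the form $\widehat{f}$ with $f \in L^2(w)$, and by construction $(\widehat{f},\widehat{g})_* = (f,g)_w = \int_{-\pi}^{\pi} f(t)\overline{g(t)}\,w(t)^{-1}\,dt$. Fix $n$ and set $g_n(t) = e^{i\lambda_n t} w(t)$, extended by zero outside $(-\pi,\pi)$. Then for any $f \in L^2(w)$,
$$
(\widehat{f}, \widehat{g_n})_* = (f, g_n)_w = \int_{-\pi}^{\pi} \frac{f(t)\,\overline{e^{i\lambda_n t} w(t)}}{w(t)}\,dt = \int_{-\pi}^{\pi} f(t)\, e^{-i\overline{\lambda_n}\, t}\,dt = \widehat{f}(\overline{\lambda_n}),
$$
where the last equality is just the definition of the Fourier transform normalization used in the paper (note the conjugation on $\lambda_n$; if the $\lambda_n$ are real this is simply $\widehat{f}(\lambda_n)$). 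Hence $\widehat{g_n}$ is, up to the harmless conjugation of the evaluation point, exactly the reproducing kernel $k_{\overline{\lambda_n}}$ of $\F_w$ at the point $\overline{\lambda_n}$, which is what the lemma asserts.

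Two small points remain to be justified. One: $\widehat{g_n} \in \F_w$, i.e. $g_n \in L^2(w)$. This is immediate from $w \in L^1(-\pi,\pi)$ (property (1) in the definition of $\K_\alpha$), since $\int_{-\pi}^{\pi} |g_n(t)|^2 w(t)^{-1}\,dt = \int_{-\pi}^{\pi} |e^{i\lambda_n t}|^2 w(t)\,dt \leq e^{2|\Im \lambda_n|\pi}\int_{-\pi}^\pi w(t)\,dt < \infty$; this is exactly the motivation recorded in the Remark preceding the statement. Two: one should note that the evaluation functional $\widehat{f}\mapsto \widehat{f}(z)$ is bounded on $\F_w$ for $z$ in the relevant set, so that "reproducing kernel" makes sense; this follows because $e^{-i\bar z t}$ restricted to $(-\pi,\pi)$ lies in $L^2(w)$ again by $w \in L^1$, and $\widehat{f}(z) = (f, e^{-i\bar z t}\,w(t))_w$ is then a bounded functional by Cauchy–Schwarz.

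There is no real obstacle here; the content is entirely bookkeeping with the non-unitary transform $f\mapsto \widehat f$ and the transported inner product $(-,-)_*$. The only place to be careful is the bookkeeping of complex conjugates in the inner product and in the Fourier kernel — getting $\overline{\lambda_n}$ versus $\lambda_n$ right — and making sure the index set ($m\in\N$ in the statement, presumably meant to be $n\in\N$ matching the $\lambda_n$) is consistent. I would state the lemma's conclusion precisely as: $\widehat{e^{i\lambda_n t}w(t)} = k_{\overline{\lambda_n}}$ in $\F_w$ for every $n\in\N$, and then the completeness/minimality of $\{e^{i\lambda_n t}\}$ in $L^2(w)$ translates, via the isometry $L^2(w)\to\F_w$, into completeness/minimality of the reproducing-kernel system $\{k_{\overline{\lambda_n}}\}$ in $\F_w$ — the reformulation promised for Section 2.
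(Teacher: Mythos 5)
Your proposal is correct and follows essentially the same route as the paper: unwind the definition of the transported inner product $(-,-)_*$ and compute $(\widehat f,\widehat{e^{i\lambda t}w(t)})_* = (f, e^{i\lambda t}w(t))_w = \int_{-\pi}^{\pi} f(t)e^{-i\lambda t}\,dt = \widehat f(\lambda)$ (for real $\lambda$). The extra checks you include — that $e^{i\lambda t}w(t)\in L^2(w)$ via $w\in L^1$, and the care with $\overline{\lambda}$ for non-real points — are handled by the paper immediately after the lemma or left implicit, but they do not change the argument.
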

\begin{proof} 
Fix a function $F \in \F_{1/w}$ and denote by $u \in L^2(1/w)$ its inverse Fourier transform. Now for any $m \in \N$ we have
$$\left(F, \mathcal{F}\left({{e^{i \lambda_m t }} \over {w(t)}}\right) \right)_* = \left(u, {{e^{i \lambda_m t }} \over {w(t)}} \right)_{1/w} = \int_{-\pi}^{\pi} u(t) \overline{\left({{e^{i \lambda_m t }} \over {w(t)}}\right)} w(t) dt = \int_{-\pi}^{\pi} u(t) e^{-i \lambda_m t} dt =  F(\lambda_m).$$
\end{proof}

The system $\{e^{i \lambda_n t} \} \subset L^2(w)$ is unitarily equivalent to the system $\left\{{{e^{i \lambda_n t}} \over {w(t)}} \right\} \subset L^2(1/w)$ via the map $f \mapsto f/w$. Hence, to prove that there exists an exact system $\{e^{i \lambda_n t}\}_n$ in $L^2(w)$ which is not hereditarily complete, it is sufficient to find a system of reproducing kernels $k_{\lambda_n} = \widehat{{{e^{i \lambda_n t}} \over {w(t)}} }$ which is exact in the space $(\F_{1/w}, \|-\|_{*})$ and not hereditarily complete.

\begin{definition} We define a {\it normed kernel} $\mathbb{K}_{\lambda}$ by the rule 
$$\mathbb{K}_{\lambda}(z) = {{k_{\lambda}(z)} \over {\|k_{\lambda}\|_{\F_{1/w}}}} \quad \text{ for } \quad \lambda \in \CC.$$
\end{definition}
\begin{remark}\label{rem:ker} Note that for any complete and minimal system of reproducing kernels $\{k_{\lambda}\}_{\lambda \in \Lambda}$ in $\F_{w}$ there exists a generating function $G$ of the system. It has the form
$$G(z) = p.v. \prod_{\lambda \in \Lambda} \left(1 - {{z} \over {\lambda}}\right)$$
Obviously $G \not\in \F_{w}$. Now one can divide $G$ by $z - \lambda_0$ and obtain the function
$$g_{\lambda_0}(z) :={{1} \over {G'(\lambda_0)}} \cdot \prod_{\lambda \ne \lambda_0} \left(1 - {{z} \over {\lambda}}\right).$$
Such a function lies in $\mathcal{F}_{w}$ for any $w \in \K$, and the system $\{g_{\lambda}\}_{\lambda \in \Lambda}$ is biorthogonal (up to multiplicative constants) to the system $\{\mathbb{K}_{\lambda}(z)\}_{\lambda \in \Lambda}$.
\end{remark}

\subsection{The structure of $\F_w$}
The structure of such spaces was studied in \cite{LYu} for a slightly different situation, where (\ref{legendre}) uses the usual Legendre transform. Here we adapt the technique from \cite{LYu} to our case. \par 
Consider the weighted space $L^2(w)$ under the assumptions $w \in L^1(-\pi,\pi)$ and $1/w \in L^1(-\pi,\pi)$. Set $h = \ln \sqrt{w}$ and suppose that $h$ is convex. Recall that $\tilde{h}$ stands for a modified Legendre transform of $h$, see \eqref{legendre}. Put $I = (-\pi,\pi)$.

Throughout this subsection, for any $f \in L^2(w)$, we denote by $F(f)$ the function 
$$F(f)(z) = \displaystyle \int_I {{f(t)} \over {w(t)}} e^{itz} \ dt \in \mathcal{F}_{w}.$$
It is easy to see that this is simply the regular Fourier transform of $g = f/w$. Now let us formulate some properties of these functions.\par
Define the function $\rho_{\tilde{h}}$ such that
$$ \int_{x- \rho_{\tilde{h}} (x)}^{x+\rho_{\tilde{h}} (x)} |{h}'(x) - {h}'(t)| dt \equiv 1 \text{ for any } x \in \R.$$
\begin{lemma} For any $f \in L^2(w)$ and any $z = x+iy \in \CC$ we have $|F(f)(z)| \leq C_f e^{\tilde{h}(y)}$ for some constant $C_f$ depending only on $f$.
\end{lemma}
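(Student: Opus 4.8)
The plan is to estimate $|F(f)(z)|$ directly from the defining integral using the Cauchy–Schwarz inequality, exploiting the fact that $f/\sqrt{w} \in L^2(I)$ by hypothesis. Writing $z = x+iy$, we have
$$|F(f)(z)| = \left| \int_I \frac{f(t)}{w(t)} e^{itz}\, dt \right| \leq \int_I \frac{|f(t)|}{w(t)} e^{-ty}\, dt = \int_I \frac{|f(t)|}{\sqrt{w(t)}} \cdot \frac{e^{-ty}}{\sqrt{w(t)}}\, dt.$$
Applying Cauchy–Schwarz, the first factor $\|f\|_w = \bigl(\int_I |f|^2/w\bigr)^{1/2} =: C_f$ is finite and depends only on $f$, and we are left with
$$|F(f)(z)| \leq C_f \left( \int_I \frac{e^{-2ty}}{w(t)}\, dt \right)^{1/2} = C_f \left( \int_I e^{-2ty - 2h(t)}\, dt \right)^{1/2},$$
since $h = \ln\sqrt{w}$ so $1/w = e^{-2h}$.

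The next step is to recognize the exponent: for each fixed $t$, $-2ty - 2h(t) \leq 2\sup_{s\in I}(-ys - h(s)) = 2\tilde h(y)$ by the very definition of the modified Legendre transform in \eqref{legendre} (taking $x = y = \Im z$). Hence $\int_I e^{-2ty-2h(t)}\,dt \leq |I| \cdot e^{2\tilde h(y)} = 2\pi\, e^{2\tilde h(y)}$, which gives $|F(f)(z)| \leq C_f \sqrt{2\pi}\, e^{\tilde h(y)}$. After absorbing $\sqrt{2\pi}$ into the constant, this is the claimed bound $|F(f)(z)| \leq C_f\, e^{\tilde h(z)}$ (here $\tilde h(z)$ is shorthand for $\tilde h(\Im z)$, and the factor $2$ in the statement's exponent just reflects a normalization choice, e.g. working with $\|f\|_w^2$ without the square root or a different convention for $h$ — in any case one obtains an estimate of exactly this form).

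I expect essentially no obstacle here: this is the standard Paley–Wiener type bound for a Fourier transform of a function supported on a bounded interval, with the weight producing the Legendre transform in the exponent rather than the naive linear bound $|y|\cdot\pi$. The only point requiring a word of care is measurability/finiteness — that $\tilde h(y) < \infty$ for all $y$, which follows because $I$ is bounded and $h$ is (by convexity and continuity on the open interval) bounded below on compact subinterals, together with the integrability hypothesis $1/w \in L^1(I)$ that guarantees the supremum defining $\tilde h$ is not $+\infty$ in a way that would spoil the integral bound. One should also note that $F(f)$ is entire: this is immediate since $t \mapsto f(t)/w(t)$ is in $L^1(I)$ (again by Cauchy–Schwarz, $\int_I |f|/w \leq \|f\|_w \cdot \|1/\sqrt w\|_{L^2} = \|f\|_w \|1/w\|_{L^1}^{1/2} < \infty$), so the integral defining $F(f)(z)$ converges locally uniformly in $z$ and may be differentiated under the integral sign.
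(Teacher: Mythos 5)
Your proposal is correct and follows essentially the same route as the paper: both arguments split $1/w = (1/\sqrt{w})\cdot(1/\sqrt{w})$, bound $e^{-yt}/\sqrt{w(t)} = e^{-yt-h(t)} \leq e^{\tilde h(y)}$ pointwise via the definition \eqref{legendre}, and control the remaining integral of $|f|/\sqrt{w}$ by the $L^2(w)$-norm of $f$ (you do this via Cauchy--Schwarz before taking the supremum, the paper after; the difference is cosmetic). The mismatch you flag between $e^{\tilde h}$ and the stated $e^{2\tilde h}$ is present in the paper's own computation as well and is harmless.
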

\begin{proof} For any $z = x+iy\in \CC$ one can write
$$|F(f)(z)| = \left| \int_I {{f(t)} \over {\sqrt{w(t)}}} {{e^{itz}} \over { \sqrt{w(t)} }} dt \right| = \left| \int_I {{f(t)} \over {\sqrt{w(t)}}} e^{it(x+iy) - \ln \sqrt{w(t)}} dt \right| \leq $$
$$
\leq  \int_I {{|f(t)|} \over {\sqrt{w(t)}}} e^{\sup_{t \in I}-(yt+\ln \sqrt{w(t)})} dt  \leq \|f\|_w \sqrt{|I|} e^{\tilde{h}(y)}.$$
\end{proof}

It is clear that $F(f) $ is an entire function of exponential type. Now the Parseval–Plancherel identity gives us

$$\int_{-\infty}^{\infty} |F(f)(x+iy)|^2 dx = \int_{-\infty}^{\infty} \left| \int_I {{f(t) e^{ixt} e^{-yt}} \over {w(t)}} dt \right|^2 dx = $$
$$=\int_{-\infty}^{\infty} |\F\left( {{f(t) e^{-yt}} \over {w(t)}} \right) (x)|^2 dx = 2\pi \int_I {{|f(t)|^2 e^{-2yt} } \over {w^2(t)}} dt = 2\pi \int_I |g(t)|^2  e^{-2yt} dt,$$
where $|\F\left( {{f(t) e^{-yt}} \over {w(t)}} \right) (x)$ stands for the Fourier transform of the function in the argument.

Fix an entire function $G$ such that $G = F(f)$ for some $f \in L^2(w)$. One can define

  \begin{equation}\label{eq:1}
\|G\|^2 := \int_{\R} \int_{\R} |G(x+iy)|^2 e^{-2\tilde{h}(y)} \rho_{\tilde{h}}(y) dx d\tilde{h}'(y).
      \end{equation}

Using minor modifications to Lemma 2 in \cite{LYu}, one can obtain
$$\|G\|^2 = 2\pi \int_I {{|f(t)|^2 } \over {w^2(t)}} \int_{-\infty}^{\infty} e^{2(-yt-\tilde{h}(y))} \rho_{\tilde{h}}(y) d\tilde{h}'(y)  dt \leq C \int_I {{|f(t)|^2} \over {w^2(t)}} e^{2\tilde{\tilde{h}}(t)} dt.$$
Since $h$ is convex, the Fenchel--Moreau theorem gives $\tilde{\tilde{h}}(t) = h(t) = \ln \sqrt{w(t)}$, hence $e^{2\tilde{\tilde{h}}(t)} = w(t)$ and
$$\|G\|^2 \leq C \int_I {{|f(t)|^2} \over {w^2(t)}}\, w(t)\, dt = C \int_I {{|f(t)|^2} \over {w(t)}}\, dt = C \|f\|^2_w.$$
The lower bound is technically analogous to the corresponding bound from Lemma 2 in \cite{LYu} after reversing the inequalities. So we have an isomorphism of the space of entire functions with norm defined in \eqref{eq:1} and the space $L^2(w)$.\par
Finally, for $h \in C^2(-\pi,\pi)$, one can simplify the norm formula (see Eq. (1) in \cite{Is} and also \cite{LYu}):
$$\| F\|_{\mathcal{F}}^2 = \int_{\R} \int_{\R} {{|F(x+iy)|^2} \over {K(y)}} \tilde{h}''(y) dx dy,$$
where $K(y) = \|e^{izt} \|^2_w = \displaystyle \int_{-\pi}^{\pi} e^{-2yt-\ln w(t)} \ dt = \displaystyle \int_{-\pi}^{\pi} e^{-2yt-2h(t)} \ dt$.

\subsection{Bounds for $K(y)$}
We now compute the asymptotic behavior of the function
$$K(y) = \displaystyle \int_{-\pi}^{\pi} e^{-2yt-2h(t)} dt.$$
Fix $0<\alpha<1$. Let us consider the weight $w(t) = (\pi - |t|)^{-\alpha}$ for a while. Now $h(t)  = \ln \sqrt{w(t)} = -\alpha/2 \ln (\pi-|t|)$ is a convex function. Set $m_y(t) := -2yt + \alpha \ln(\pi - |t|)$. Observe that
$$K(y) = \int_{-\pi}^{\pi} e^{m_y(t)} dt = \int_0^{\pi}e^{m_y(t)} dt + \int_{-\pi}^0 e^{m_y(t)} dt.$$
By symmetry, one can assume $y \to -\infty$, in which case the second integral tends to zero. The only extremal point $t_0 = \pi + \frac{\alpha}{2y}$ of $m_y$ lies in $[0, \pi]$ for $y< - {{\alpha} \over {2\pi}}$. Now for any $\varepsilon>0$ we have
\begin{equation}\label{eq:asK}
\int_0^{\pi} e^{m_y(t)} dt = \int_{t_0 - \varepsilon}^{t_0+\varepsilon}  e^{m_y(t)} dt + \int_0^{t_0-\varepsilon}  e^{m_y(t)}dt + \int_{t_0+\varepsilon}^{\pi}  e^{m_y(t)} dt.
\end{equation}
For $a > 0$ and $\alpha \in (0,1)$ we define
$$I_{\varepsilon}(a) = \int_0^{\varepsilon} e^{at} \left({{\alpha} \over {a}} - t \right)^{\alpha} dt = {{e^{\alpha}} \over {a^{\alpha+1}}} \int_{\alpha - \varepsilon a}^{\alpha} t^{\alpha} e^{-t} dt.$$
Observe that the first integral in \eqref{eq:asK} equals $c_1 I_{\varepsilon}(-2y)$ for some constant $c_1$. By standard Laplace method arguments, the dominant contribution to the integral comes from the $\varepsilon$-neighborhood of the maximum point $t_0$. As $y \to -\infty$, the second and third integrals are exponentially small compared to the first one and thus can be safely neglected. Hence, for the weight $w=w_{\alpha}$, one has 
$$|K_{\alpha}(y)| \asymp {{e^{2|y|\pi}} \over {|y|^{\alpha+1}}} \text{ for } |y| \to \infty.$$

Now consider an arbitrary $w\in \K_{\alpha}$ for some $0<\alpha<1$.

\begin{lemma} \label{asymp:K} For any $w \in \K_{\alpha}$ we have
$$K(y) \gtrsim {{e^{2|y| \pi}} \over {|y|^{1+\alpha}}} \text{ for } |y| \to +\infty.$$
\end{lemma}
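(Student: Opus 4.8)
The plan is to reduce the lower bound for general $w \in \K_\alpha$ to the already-computed asymptotic $|K_\alpha(y)| \asymp e^{2|y|\pi}/|y|^{1+\alpha}$ for the model weight $w_\alpha$, by using condition (3) in the definition of $\K_\alpha$ (which gives $w(t) \le C w_\alpha(t)$ near $\pm\pi$) to control the contribution of the integrand near the endpoints, where the mass of $K(y)$ concentrates as $|y| \to \infty$. First I would fix $y$ large and negative (by evenness of $w$, hence of $h$, the case $y \to +\infty$ is symmetric, and the integral over $[-\pi,0]$ is negligible compared to the integral over $[0,\pi]$). Then $K(y) \ge \int_0^\pi e^{-2yt - 2h(t)}\,dt \ge \int_{\pi-\delta}^\pi e^{-2yt}\,\frac{1}{w(t)}\,dt$, and on the neighborhood $(\pi-\delta,\pi)$ we have $1/w(t) \ge c/w_\alpha(t) = c(\pi-t)^\alpha$ by condition (3). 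Thus $K(y) \gtrsim \int_{\pi-\delta}^\pi e^{-2yt}(\pi-t)^\alpha\,dt$, and this last integral is, up to the harmless truncation and the factor $e^{\alpha}$, exactly the quantity $I_\delta(-2y)$ estimated above (after the substitution $t \mapsto \pi - t$ and comparison of $(\pi-t)^\alpha$ with $(\alpha/a - s)^\alpha$ near $s=0$, where $a=-2y$).

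The core computation is then the Laplace-type asymptotic already carried out for $I_\varepsilon(a)$: one has
\begin{equation*}
\int_{\pi-\delta}^{\pi} e^{-2yt}(\pi-t)^\alpha\,dt = e^{2|y|\pi}\int_0^\delta e^{-2|y|s} s^\alpha\,ds = \frac{e^{2|y|\pi}}{(2|y|)^{1+\alpha}}\int_0^{2|y|\delta} u^\alpha e^{-u}\,du,
\end{equation*}
and since $\int_0^{2|y|\delta} u^\alpha e^{-u}\,du \to \Gamma(\alpha+1) > 0$ as $|y| \to \infty$, this is $\asymp e^{2|y|\pi}/|y|^{1+\alpha}$. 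Combining the two displays yields $K(y) \gtrsim e^{2|y|\pi}/|y|^{1+\alpha}$ for $|y|$ large, and for $y$ in any bounded set $K(y)$ is bounded below by a positive constant (the integrand is continuous and positive and $1/w \in L^1$), so the claimed bound holds for all $y$.

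The main obstacle is purely bookkeeping rather than conceptual: one must make sure that condition (3) is genuinely available, i.e. that the relevant neighborhood $(\pi-\delta,\pi)$ on which $w(t) \le C w_\alpha(t)$ can be taken fixed independently of $y$ (it can — $\delta$ is the size of the neighborhood from the definition of $\K_\alpha$), and that the discarded pieces — the integral over $[-\pi,0]$, the integral over $[0,\pi-\delta]$, and the tail $\int_{2|y|\delta}^\infty u^\alpha e^{-u}\,du$ — are all genuinely of smaller order. The first two are $O(e^{2|y|(\pi-\delta)})$ since $1/w$ is bounded on $[0,\pi-\delta]$ (by continuity and positivity of $w$) and similarly on $[-\pi,0]$ after accounting for the sign of the exponent, and the tail of the Gamma integral is exponentially small; none of this interferes with the leading term. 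Note that only the lower bound is needed here, so we do not require the matching upper bound (which would instead use an upper estimate for $1/w$ and is not part of the hypotheses in this generality).
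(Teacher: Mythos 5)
Your proposal is correct and follows essentially the same route as the paper: both use condition (3) of the definition of $\K_{\alpha}$ to bound the integrand from below by the model integrand $e^{-2yt}(\pi-|t|)^{\alpha}$ near the endpoints, and then invoke the Laplace-type asymptotic $e^{2|y|\pi}/|y|^{1+\alpha}$ already computed for $K_{\alpha}$. The only (harmless) differences are that you re-derive that asymptotic explicitly via the incomplete Gamma integral instead of citing the earlier computation, and that you spell out the bounded-$y$ case and the positivity of the discarded pieces, which the paper leaves implicit.
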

\begin{proof}
Recall that the extremal points of $m_\alpha(t)$ tend to $\pm \pi$ as $|y| \to \infty$. Hence, for sufficiently large $|y|$, the dominant contribution to the integral defining $K(y)$ comes entirely from the neighborhoods $V_{-\pi} \cup V_\pi$, while the integral over the complement is exponentially negligible. There exist neighborhoods $V_{-\pi}$ and $V_{\pi}$ of the points $\pm \pi$ such that for any $t \in V_{-\pi} \cup V_{\pi}$ the following formulae hold:
$$0< w(t) < c w_{\alpha}(t) \text{ and } h(t) = \ln \sqrt{w(t)} \leq \ln \sqrt{c} + \ln \sqrt{w_{\alpha}(t)} = c_1 + h_{\alpha}(t),$$
where $w_{\alpha}(t) = (\pi - |t|)^{-\alpha}$ and $h_{\alpha}(t) = -{{\alpha} \over {2}} \ln (\pi - |t|)$. Now fix $y$ for a while and define $m_{\alpha}(t) := -2yt - 2h_{\alpha}(t)$ and $m(t) := -2yt-2h(t)$. It is clear that
$$m(t) = -2yt - 2h(t) \ge -2yt -2h_{\alpha}(t) - c_1 = -c_1 + m_{\alpha}(t).$$ 
Using the asymptotic for $K_{\alpha}(y)$, we now obtain
$$K(y) \gtrsim {{e^{2|y| \pi}} \over {|y|^{1+\alpha}}} \text{ for } |y| \to +\infty.$$
\end{proof}

\subsection{Bounds for $\tilde{h}''(t)$}
First, observe that we have $w \in \K_{\alpha}$ for some $0< \alpha < 1$ because $w \in \K$. By (2) in Definition \ref{def:Ka}, we have
$$|\tilde{h}''(t)| \leq C |\tilde{h}_{\alpha}''(t)| \text{ for any } t \in(-\pi,\pi)$$ 
for some $h_{\alpha} = \ln \sqrt{w_{\alpha}}$ induced by $w_{\alpha}(t) = (\pi - |t|)^{-\alpha}$. Now we only need to calculate the asymptotics of $\tilde{h}_{\alpha}''(t)$. Without loss of generality, we have $w=w_{\alpha}$ and $h=h_{\alpha}$. In this case $h(t)  = \ln \sqrt{w(t)} = -\alpha/2 \ln (\pi-|t|)$ is convex. Hence, we obtain
$$\tilde{h}(y) = - \inf_{t \in (-\pi, \pi)} (yt + \ln \sqrt{w(t)}) =  - \inf_{t \in (-\pi, \pi)} (yt - \alpha/2 \ln(\pi - |t|)).$$
Define $m_y(t) := yt - \alpha/2 \ln(\pi - |t|)$. It is clear that for $y \in (-\infty, -{{\alpha} \over {2\pi}})$ the point $t_0 = \pi + {{\alpha} \over {2y}}$ is the point of minimum of $m_y$. For $y \in ({{\alpha} \over {2\pi}}, +\infty)$ the point $t_1 =  {{\alpha} \over {2y}} - \pi$ is the point of minimum of $m_y$. Finally, for $y \in (-{{\alpha} \over {2\pi}} , {{\alpha} \over {2\pi}}) $ the minimum point is 0. We can conclude that 

\begin{equation} \label{asymp:h} \tilde{h}''(y) = \begin{cases} 
{{\alpha} \over {2y^2}}, & y \in (-\infty, -{{\alpha} \over {2\pi}}) \cup ({{\alpha} \over {2\pi}}, +\infty),\\
0, & y \in (-{{\alpha} \over {2\pi}}, {{\alpha} \over {2\pi}}).
\end{cases}
\end{equation}

\section{Proof of Theorem 1}\label{sect:Proof}
It is important to note that our proof relies on the constructions and ideas developed in the proof of Theorem 2.1 in \cite{BBB}. 
\subsection{The reformulation}
We need to show the existence of a sequence $\{\lambda_n \}_{n \in \N}$ of real numbers such that the system $\{ e^{i \lambda_n t} \}_{n \in \N}$ is complete and minimal but not hereditarily complete in $L^2(w)$. Observe that 
$$\{e^{i\lambda_n t} \}_{n \in \N} \in L^2(w)  \longleftrightarrow  \left\{ {{e^{i\lambda_n t} }\over {w(t)}}\right\}_{n \in \N} \in L^2(1/w)  \longleftrightarrow \{k_{\lambda_n} \}_{n \in \N} \in \F_{1/w}.$$
Therefore, it suffices to show that there exists a set $\Lambda \subset \R$ such that the system $\{k_{\lambda} \}_{\lambda \in \Lambda}$ is complete and minimal in $\F_{1/w}$, but not hereditarily complete. \par

\subsection{Change of norms}
Recall that for a system $\{k_{\lambda}\}_{\lambda \in \Lambda}$ of reproducing kernels in $\F_{1/w}$ with (some) norm $\|-\|_{*}$ there is always a biorthogonal system $\{g_{\lambda}\}_{\lambda \in \Lambda}$ of the form $g_{\lambda} (z) = {{G(z)} \over {z-\lambda}}$ up to multiplicative constants (see Remark \ref{rem:ker}). The system $\{k_{\lambda}\}_{\lambda \in \Lambda}$  is not hereditarily complete if and only if there exists a partition $\Lambda = \Lambda_1 \cup \Lambda_2$ such that the mixed system $\{k_{\lambda}\}_{\lambda \in \Lambda_2} \cup \{g_{\lambda}\}_{\lambda \in \Lambda_1}$ is not complete. 

By the Hahn--Banach theorem, the incompleteness of $\{k_{\lambda}\}_{\lambda \in \Lambda_2} \cup \{g_{\lambda}\}_{\lambda \in \Lambda_1}$ follows from the existence of the functions $F, H \in \F_{1/w}$ such that
$$(F, k_{\lambda}) = 0 \text{ for } \lambda \in \Lambda_2, \quad (H, g_{\lambda}) = 0 \text{ for } \lambda \in \Lambda_1, \quad \text{ and } \quad (F, H) \neq 0.$$
The orthogonality relations and the condition $(F, H) \neq 0$ depend on the choice of the inner product. However, the incompleteness of the mixed system is the property of the topology of $\F_{1/w}$. Hence, it is preserved when $\|-\|_{*}$ is replaced by any equivalent norm. Therefore it suffices to produce such a pair $F, H$ with respect to any one convenient equivalent norm.\par
From now on, we assume that the space $\F_{1/w}$ is endowed with the norm induced from $L^2(1/w)$, and that the system $\{k_{\lambda}\}_{\lambda \in \Lambda}$ is a system of reproducing kernels with respect to this norm. 

\subsection{Construction of the system}

Before presenting the exact formulas, let us briefly articulate the overall strategy. Our goal is to construct a partition $\Lambda = \Lambda_1 \cup \Lambda_2$ and two functions $F, H \in \mathcal{F}_{1/w}$ such that $(F, \mathbb{K}_\lambda)_{1/w} = 0$ for $\lambda \in \Lambda_2$, $(H, g_\lambda)_{1/w} = 0$ for $\lambda \in \Lambda_1$, and $(F, H)_{1/w} \neq 0$. This will imply the incompleteness of the mixed system.\par
To achieve this, we construct $F$ as a carefully controlled perturbation of a base function $\sigma$. If we define $\Lambda_2$ as a subset of the zeros of $F$, the reproducing kernel property immediately yields the required orthogonality $(F, \mathbb{K}_\lambda)_{1/w} = F(\lambda) = 0$ for $\lambda \in \Lambda_2$. The perturbation is chosen as a rapidly decaying series of differences of reproducing kernels $\mathbb{K}_{u_n} - \mathbb{K}_{u_n+1}$, which allows us to control the behavior of the roots via the argument principle while remaining in $\mathcal{F}_{1/w}$.\par

Fix some even integer $u_1 = Q \gg 1$ and set $u_n = 2^{n-1} u_1$ for any integer $n>1$. Consider the function $\sigma(z) = {{\sin \pi z} \over {z(z-1)}}$ and put

\begin{equation}\label{eq:F}
F(z) = \sigma(z) + \sum_{n=1}^{\infty} {{\mathbb{K}_{u_n}(z) - \mathbb{K}_{u_n+1}(z)} \over {\sqrt{u_n}}} \text{ for any } z \in \CC.
\end{equation}

\begin{lemma}\label{lem:real}
We have the following properties:
\begin{enumerate}
\item $F \in \F_{1/w}$;
\item $F(x) \in \mathbb{R}$ for any $x \in  \R$.
\end{enumerate}
\end{lemma}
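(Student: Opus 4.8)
The plan is to verify the two assertions separately, treating the convergence of the defining series as the crux of part (1). For membership in $\F_w$, I would first estimate the norm of the normed kernels $\mathbb{K}_{u_n}$ and the differences $\mathbb{K}_{u_n}(z) - \mathbb{K}_{u_n+1}(z)$ using the structure of $\F_w$ developed in Section 2. Recall $k_\lambda = \widehat{e^{i\lambda t}w(t)}$ and $\|k_\lambda\|_w^2 = K(\Im\lambda)$ evaluated appropriately (here the $u_n$ are real, so one needs $\|k_\lambda\|$ for real $\lambda$, which is bounded and comparable to a constant since $K(0) = \int_{-\pi}^\pi w^{-1}$ is finite). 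The key gain should be that $\mathbb{K}_{u_n} - \mathbb{K}_{u_n+1}$ is a \emph{difference} of adjacent normed kernels, so it behaves like a derivative: its size is $O(1)$ times the spacing, and more importantly it decays in $z$ away from the cluster near $u_n$. Using the norm formula $\|F\|_{\F_w}^2 = \int_\R\int_\R |F(x+iy)|^2 K(y)^{-1}\tilde h''(y)\,dx\,dy$ together with Lemma~\ref{asymp:K} (giving $K(y)^{-1} \lesssim |y|^{1+\alpha}e^{-2\pi|y|}$) and the asymptotic $\tilde h''(y) \asymp |y|^{-2}$ from \eqref{asymp:h}, I would bound $\|\mathbb{K}_{u_n}-\mathbb{K}_{u_n+1}\|_{\F_w}$ and, crucially, bound the cross terms $(\mathbb{K}_{u_n}-\mathbb{K}_{u_n+1}, \mathbb{K}_{u_m}-\mathbb{K}_{u_m+1})_w$ for $m \ne n$; the geometric separation $u_n = 2^{n-1}Q$ should make these cross terms summable, so that $\sum_n (\mathbb{K}_{u_n}-\mathbb{K}_{u_n+1})/\sqrt{u_n}$ converges in $\F_w$. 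Since $\sigma(z) = \sin\pi z/(z(z-1))$ is an entire function of exponential type $\pi$ with $L^2(\R)$ decay of order $|x|^{-2}$, it lies in $\F_w$ as well (its Fourier preimage is supported in $[-\pi,\pi]$ and is bounded, hence in $L^2(w)$ since $1/w \in L^1$), giving $F \in \F_w$.

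For part (2), I would argue that each summand is real on $\R$ and that the series converges, say, uniformly on compacts of $\R$ (or in a pointwise sense implied by $\F_w$-convergence via the reproducing property, since point evaluations are bounded functionals). The function $\sigma$ is visibly real on $\R$. For the kernels: $k_\lambda(z) = \int_{-\pi}^\pi e^{i\lambda t}w(t)e^{-izt}\,dt / (\text{norm})$, and for real $\lambda$ and real $z = x$ one has $\overline{k_\lambda(x)} = \int_{-\pi}^\pi e^{-i\lambda t}w(t)e^{ixt}\,dt$; using that $w$ is even one changes $t \mapsto -t$ to recover $k_\lambda(x)$, so $\mathbb{K}_\lambda(x) \in \R$ for real $\lambda, x$ (the norm $\|k_\lambda\|_w$ is a positive real constant). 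Hence each term $(\mathbb{K}_{u_n}(x) - \mathbb{K}_{u_n+1}(x))/\sqrt{u_n}$ is real, and the sum of real numbers is real; combined with $\sigma(x) \in \R$ this gives $F(x) \in \R$.

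The main obstacle is the convergence estimate in part (1): one must show the series $\sum_n (\mathbb{K}_{u_n}-\mathbb{K}_{u_n+1})/\sqrt{u_n}$ is Cauchy in $\F_w$, which requires genuine control on the off-diagonal inner products of the normed kernel differences. This is where the lacunary choice $u_n = 2^{n-1}Q$ and the polynomial-type weight class $\K_\alpha$ enter: the former forces the clusters $\{u_n, u_n+1\}$ to be well-separated, and the latter pins down $K(y)$ and $\tilde h''(y)$ precisely enough to estimate $\|\mathbb{K}_\lambda - \mathbb{K}_{\lambda+1}\|_{\F_w}$ and the localization of $\mathbb{K}_\lambda$. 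I expect the technical lemmas promised for Section 4 (bounds on $\|k_\lambda\|$, pointwise bounds on $\mathbb{K}_\lambda(z)$, and the decay of $\mathbb{K}_\lambda(z)$ for $z$ far from $\lambda$) to be exactly the tools needed here, so I would state the necessary estimates as a lemma, defer its proof to Section 4, and then assemble the convergence by a standard Schur-test / Cauchy-criterion argument.
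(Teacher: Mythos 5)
Your part (2) is essentially the paper's argument verbatim: $\sigma$ is real on $\R$, and for real $\lambda$ and $x$ the evenness of $w$ together with the substitution $t\mapsto -t$ gives $\overline{k_\lambda(x)}=k_\lambda(x)$, so the normalized kernels (divided by the positive real $\|k_\lambda\|$) are real on $\R$; that part is fine.

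For part (1) you take a genuinely heavier route than necessary, and in doing so you leave your own self-declared ``crux'' unresolved. You propose to control the off-diagonal inner products $(\mathbb{K}_{u_n}-\mathbb{K}_{u_n+1},\mathbb{K}_{u_m}-\mathbb{K}_{u_m+1})_w$ and run a Schur-test/Cauchy argument, deferring the needed localization estimates to later lemmas. But no such analysis is required: by the very definition of the normed kernel, $\|\mathbb{K}_\lambda\|_{\F_w}=1$ for every $\lambda$, so the triangle inequality gives
$$\sum_{n\ge 1}\left\|\frac{\mathbb{K}_{u_n}-\mathbb{K}_{u_n+1}}{\sqrt{u_n}}\right\|_{\F_w}\le \sum_{n\ge 1}\frac{2}{\sqrt{u_n}}=\sum_{n\ge 1}\frac{2}{\sqrt{2^{\,n-1}Q}}<\infty,$$
i.e.\ the series converges absolutely in norm, hence converges in the complete space $\F_w$. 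This is exactly what the paper does (and what its Lemma on $\|F-\sigma\|+\|H-\sigma\|\le CQ^{-1/3}$ later makes quantitative); no information about the shape, decay, or mutual inner products of the kernels is needed, only their normalization and the lacunarity of $u_n$. Your route would presumably also succeed, but it imports machinery (pointwise kernel localization, cross-term decay) that the statement does not depend on, and as written the proposal does not actually establish convergence --- it only promises that the deferred estimates would yield it. Your treatment of $\sigma\in\F_w$ (bounded preimage supported on $(-\pi,\pi)$ plus integrability of $1/w$) is correct and slightly more explicit than the paper's one-line claim.
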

\begin{proof} Since $|\sin\pi z|\asymp e^{\pi|y|}$ and $|z(z-1)|^2\asymp(1+|z|)^4$, we have
$$\|\sigma\|_{\F_{1/w}}^2\asymp\int_{\R}\int_{\R}\frac{|y|^{\alpha-1}}{(1+|z|^2)^2}\,dx\,dy<\infty$$
using Lemma \ref{asymp:K} and Equation \eqref{asymp:h}. The same estimate puts all finite linear combinations of the kernels in $\F_{1/w}$, whence $F\in\F_{1/w}$ by the closedness of the space, since the series absolutely converges in norm.\par
In order to prove the second claim one should note that $\sigma(x) \in \R$ for any $x \in \R$. It remains to show that $k_{\lambda}(x) = \widehat{ \frac{e^{-i \lambda t}}{w(t)}}(x)$ is real for $x \in \R$: indeed, for any $\lambda \in \R$ we have $\overline{k_{\lambda}(x)} =  k_{\lambda}(x)$, 
because $w$ is even on $(-\pi,\pi)$.\par
\end{proof} 

Now we evaluate $F(u_n + \beta)$ for a local variable $\beta \in [0, 1]$. Since $1/w$ is an even function, the unnormalized kernels at this point take the purely real form
$$k_{u_n}(u_n + \beta) = \widehat{1/w}(\beta) = \int_{-\pi}^{\pi} {{\cos(\beta t)} \over {w(t)}} dt.$$
The dominant contribution to $F(u_n+\beta)$ comes from the local difference $\frac{\mathbb{K}_{u_n} - \mathbb{K}_{u_n+1}}{\sqrt{u_n}}$. Indeed, it is easy to see that the remaining terms of the series are bounded by $o(u_n^{-1/2})$ uniformly for $\beta \in [0, 1]$. Thus, we have
$$F(u_n+\beta) = {{1} \over {\sqrt{u_n}\|k_{u_n}\|}} \left( \int_{-\pi}^{\pi} {{\cos(\beta t) - \cos((\beta-1)t)} \over {w(t)}} dt + o(1)\right).$$
Let $g(\beta)$ denote the integral in the brackets. Observe that $g(1/2) = 0$ and $g'(1/2)< 0$, since $t \sin(t/2) > 0$ almost everywhere on $(-\pi, \pi)$. Now the function $g(\beta)$ crosses the zero axis transversally. By the implicit function theorem, the perturbed equation $g(\beta) + o(1) = 0$ has exactly one simple root $\beta_n = 1/2 + o(1)$ for sufficiently large $n$. Thus, there exists a strictly real sequence $\beta_n \in \left({{1} \over {3}}, {{2} \over {3}} \right)$ such that $F(u_n + \beta_n) = 0$ and $\lim \limits_{n \to \infty} \beta_n = {{1} \over {2}}$. Note that $\beta_n$ depends heavily on our choice of $Q$. \par

Set
$$S(z) := \prod_{n \ge 1} \left(1 - {{z} \over {u_n+\beta_n}}\right); \quad \Lambda_2 := \Ker F \setminus \{u_n + \beta_n\}_{n \in \N}.$$
Fix some real numbers $v_n \in B_1 (u_n - \sqrt{u_n}) \setminus \Lambda_2$ and let $\Lambda_1$ be the set $\{v_n\}_{n \ge 1}$. One can define two functions $G_1$ and $G_2$ with simple zeros in $\Lambda_1$ and $\Lambda_2$ respectively:
$$G_1(z) = \prod_{n \ge 1} \left(1 - {{z} \over {v_n}}\right); \quad G_2(z) = {{F(z)} \over {S(z)}}.$$
Now, for the partition $\Lambda = \Lambda_1 \cup \Lambda_2$, the generating function of this sequence is $G(z) = G_1(z) G_2(z)$.\par
Consider the system of reproducing kernels $\{\mathbb{K}_{\lambda}\}_{\lambda \in \Lambda}$ in $\F_{1/w}$. It is clear that the system \{$g_{\lambda} (z) = {{G(z)} \over {z- \lambda}}\}_{\lambda \in \Lambda}$ is biorthogonal (up to multiplicative constants) to $\{\mathbb{K}_{\lambda}\}_{\lambda \in \Lambda}$ (see Remark \ref{rem:ker}). Moreover, we can assume $|\lambda| > {{1} \over {2}}$. \par
Finally, let us fix for a while a real sequence $d_n \in (-1,1)$ and define the function $H$ as follows:
\begin{equation} \label{eq:H}
H(z) = {{\sin \pi z} \over {z(z-1)}} + \sum_n {{d_n} \over {\sqrt[3]{u_n}}} \mathbb{K}_{u_n} (z).
\end{equation}
The appropriate sequence $d_n \in (-1,1)$ will be chosen later.

\subsection{Properties of the system}

In Section \ref{sect:Tech}, we formulate and prove some technical lemmas to show that there exist parameters $Q \in \N$, $d_n \in (-1,1)$ and $\beta_n$ such that the system of reproducing kernels $\{\mathbb{K}_{\lambda}\}_{\lambda \in \Lambda}$ is complete and minimal in $\F_{1/w}$, while the functions $F$ and $H$ satisfy the following relations:
$$(F, \mathbb{K}_{\lambda})_{1/w} = 0 \text{ for any } \lambda \in \Lambda_2, \quad (H, g_{\lambda})_{1/w} = 0 \text{ for any } \lambda \in \Lambda_1.$$
Finally, we show that one can choose the parameters such that $(F,H)_{1/w} \neq 0$. Thus, we obtain a partition $\Lambda = \Lambda_1 \cup \Lambda_2$ such that the mixed system induced by the partition is not complete in $\F_{1/w}$ by Hahn-Banach theorem.

\section{Technical lemmas}\label{sect:Tech}
We proceed using the notation of Section \ref{sect:Proof}. Since $1/w \in \K$, let us denote $W= 1/w$. In order to prove all the technical lemmas, it is sufficient to consider a weight $W \in \K_{\alpha}$ for some fixed $0<\alpha<1$, and then take the union over all such classes $\K_{\alpha}$. Therefore, throughout this section, we apply the results of Section \ref{sect:Prelim} directly to the weight $W$, keeping the notations $h = \ln \sqrt{W}$ and $K(y) = \|e^{izt}\|^2_W$. \par
First, note that the function $e^{-2|\Im z|\pi}$ decreases rapidly as $|\Im z| \to \infty$ outside the horizontal band $|\Im z | < C$, and it is uniformly bounded inside this band. Furthermore, by the construction of $\tilde{h}''(y)$, all integral estimates should be considered only on the set $|\Im z | > {{\alpha} \over {2\pi}}$. Define the functions $h$ and $K$ by the rules
$$h(z) := h(\Im z), \quad K(z) = K(\Im z), \quad \text{ for } z = \Re z + i\Im z \in \CC.$$

\begin{lemma}\label{lem:in} For any $\lambda \in \Lambda$ we have $g_{\lambda} \in \mathcal{F}_{1/w}$. 
\end{lemma}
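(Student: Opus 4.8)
The plan is to show directly that $g_\lambda(z) = G(z)/(z-\lambda)$ lies in $\mathcal{F}_w$ by verifying that it is an entire function of the correct exponential type and that the norm integral $\int_{\R}\int_{\R} |g_\lambda(x+iy)|^2 K(y)^{-1} \tilde h''(y)\, dx\, dy$ converges. First I would record that $g_\lambda$ is entire: $G = G_1 G_2$ has a simple zero at every point of $\Lambda$, and dividing out the zero $z-\lambda$ leaves an entire function. Next I would pin down the growth of $G$ on the real line and in the strip. Since $G$ is (by construction) the generating function of the exact system $\{\mathbb{K}_\lambda\}_{\lambda\in\Lambda}$, and the $\lambda$'s are real and essentially separated by the points $u_n, u_n+\beta_n, v_n$ whose union has the same counting asymptotics as $\Z$ near the real axis, $G$ behaves like $\sin\pi z$ up to lower-order factors. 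More precisely, I would compare $G$ with $\sigma(z)=\sin\pi z /(z(z-1))$, showing $|G(z)| \lesssim |\sin \pi z| \cdot (1+|z|)^{-N}$ times at most a polynomial correction coming from the perturbation of the zeros; the key point is that $G$ has exponential type exactly $\pi$ in the imaginary direction, so $|G(x+iy)| \lesssim e^{\pi|y|}(1+|x+iy|)^{-M}$ for a suitable $M$ on a set avoiding small discs around the real zeros.

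The heart of the estimate is then the integral. Using Corollary/formula for the norm, $\|g_\lambda\|_{\F_w}^2 \asymp \int\int |G(x+iy)|^2 |x+iy-\lambda|^{-2} K(y)^{-1}\tilde h''(y)\,dx\,dy$. By Lemma~\ref{asymp:K}, $K(y) \gtrsim e^{2\pi|y|}|y|^{-(1+\alpha)}$, so $K(y)^{-1} \lesssim e^{-2\pi|y|}|y|^{1+\alpha}$; and by \eqref{asymp:h}, $\tilde h''(y) \asymp |y|^{-2}$ for $|y| > \alpha/(2\pi)$ and vanishes otherwise. Combining these with $|G(x+iy)|^2 \lesssim e^{2\pi|y|}(1+|x+iy|)^{-2M}$ gives an integrand bounded by $(1+|x+iy|)^{-2M}|x+iy-\lambda|^{-2}|y|^{\alpha-1}$ on $|y|>\alpha/(2\pi)$. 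I would integrate in $x$ first: for $M$ large enough the $x$-integral is $\lesssim (1+|y|)^{-1}$ uniformly (the singularity at $x=\Re\lambda$ being integrable since $|z-\lambda|^{-2}$ has an integrable one-dimensional trace once one stays away from $y=\Im\lambda=0$; and since $\lambda$ is real with $|\lambda|>1/2$, for $|y|>\alpha/(2\pi)$ we are bounded away from the pole). Then the remaining $y$-integral $\int_{|y|>\alpha/(2\pi)} |y|^{\alpha-2}\,dy$ converges because $\alpha-2 < -1$. Near the real axis ($|y|\le \alpha/(2\pi)$) there is nothing to check since $\tilde h''$ is identically zero there. This yields $\|g_\lambda\|_{\F_w} < \infty$.

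The main obstacle is obtaining the pointwise bound $|G(x+iy)| \lesssim e^{\pi|y|}(1+|x+iy|)^{-M}$ with a decay exponent $M$ large enough to beat the polynomial factor $|y|^{\alpha-1}$ and the pole $|z-\lambda|^{-2}$ after integration — i.e., controlling the product $G_1 G_2 = G_1 \cdot f/S$ when $f = F$ itself is an infinite sum of normed kernels. Here I would use that $F \in \F_w$ (established in the previous lemma) together with a reproducing-kernel growth estimate $|\mathbb{K}_\mu(z)| \lesssim \|k_z\|_{\F_w}$, bounding $\|k_z\|_{\F_w}$ via the explicit formula $\|k_z\|^2 = K(\Im z)$-type quantity, to see that $|F(z)| \lesssim e^{\pi|\Im z|}$ with the needed polynomial gain coming from $\sigma$ and from the $1/\sqrt{u_n}$ weights; dividing by $S(z)$, whose zeros $u_n+\beta_n$ are a subsequence of the zeros of $F$, exactly cancels the slowly-growing part and restores type $\pi$ with polynomial decay off the zero set. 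Once this growth bound for $G$ is in hand, the convergence of the norm integral is the routine computation sketched above, and it is uniform enough to conclude $g_\lambda \in \F_w$ for every $\lambda\in\Lambda$ (indeed with $\|g_\lambda\|_{\F_w}$ controlled in terms of $\mathrm{dist}(\lambda, \text{other zeros})$, which will be used later).
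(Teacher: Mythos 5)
Your reduction of the norm to an integral over the region $|\Im z|>\alpha/(2\pi)$ with weight $e^{-2\pi|y|}|y|^{\alpha-1}$ is exactly the paper's starting point, and your remark that nothing need be checked in the band where $\tilde h''$ vanishes is correct. The genuine gap is the step you yourself flag as ``the main obstacle'': the pointwise bound $|G(x+iy)|\lesssim e^{\pi|y|}(1+|z|)^{-M}$ with $M$ ``large enough''. Such a bound cannot hold. If it held outside small discs centred at the real zeros (discs which do not meet the region $|\Im z|>\alpha/(2\pi)$ once their radii are small), then already for any $M>(1+\alpha)/2$ --- in particular $M=1$ --- the norm integral of $G$ itself would converge, i.e.\ $G\in\F_w$; but $G$ vanishes on all of $\Lambda$, so $G\perp k_\lambda$ for every $\lambda\in\Lambda$ and the system $\{\mathbb{K}_\lambda\}_{\lambda\in\Lambda}$ could not be complete, contradicting both Remark 2.5 (``obviously $G\notin\F_w$'') and the completeness argument proved two subsections later. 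The generating function of an exact system of kernels sits exactly at the threshold of membership, so no fixed polynomial gain $M$ is available; the most one gets off the exceptional balls is $|G_1/S|\asymp\const$ (Lemma 4.3), and $F$ itself only satisfies $|F(z)|\lesssim e^{\pi|y|}$ up to the slowly varying factor $\|k_z\|$, with no genuine polynomial decay in $x$. Note also that the pole $|z-\lambda|^{-2}$ \emph{helps} integrability rather than needing to be ``beaten'': it is precisely what makes $g_\lambda$ integrable while $G$ is not.

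The paper sidesteps the pointwise bound entirely. It writes $g_\lambda=(G_1/S)\cdot f/(z-\lambda)$ with $f=F$, uses $|G_1/S|\asymp\const$ and the uniform boundedness of $1/|z-\lambda|$ off a union of small balls to dominate the integral there by $\|F\|_{\F_w}^2$ --- an $L^2$ statement, which is all the previous lemma supplies --- and then treats the balls around the points $u_n+\beta_n$, where $S$ vanishes but $F$ vanishes to the same order so that $F/S$ is analytic, by a mean-value (subharmonicity) inequality transferring the integral over $B_{1/4}(u_n+\beta_n)$ to the annulus $B_{1/2}(u_n+\beta_n)\setminus B_{1/4}(u_n+\beta_n)$, where the earlier estimate applies. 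To repair your argument, replace the global decay bound on $G$ by this $L^2$ reduction to $\|F\|_{\F_w}$; the remainder of your computation (the $|y|^{\alpha-1}$ weight from Lemma \ref{asymp:K} and \eqref{asymp:h}, the role of the pole, the vanishing of $\tilde h''$ near $\R$) then goes through.
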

\begin{proof} Without loss of generality, one can assume $|\lambda| > {{1} \over {2}} > {{\alpha} \over {2\pi}}$. Using the notation $z = x+iy$, we have
$$\int_{\R} \int_{\R} {{|g_{\lambda}(z)|^2 \tilde{h}''(z)} \over {K(z)}} dx dy = \int_{\R} \int_{\R} {{|G_1(z) F(z)|^2 \tilde{h}''(z)} \over {|z- \lambda|^2 |S(z)|^2 K(z)}} dx dy \lesssim $$
$$\lesssim  \int_{\R} \int_{\R \setminus (-\alpha/ 2\pi, \alpha/ 2\pi)} \left| {{F(z) G_1(z)} \over {S(z)(z-\lambda)}} \right|^2 e^{-2\pi|y|} |y|^{\alpha-1} dy dx$$
 by Lemma \ref{asymp:K} and formula (\ref{asymp:h}), where the exponential factor naturally compensates for the exponential growth of the entire function $|F(z)|^2$. Since $\alpha -1 \in (-1, 0)$ we have
$$|G_1(z)| \asymp \const \text{ outside } B_{1/10}(v_n), \qquad |S(z)| \asymp \const \text{ outside } B_{1/10}(u_n + \beta_n)$$
for a sufficiently large $n \in \N$. Let $\{x_n\}_{n \in \N}$ denote the sequence $\Ker F \setminus \{u_n+\beta_n\}_{n \in \N}$. Now the function ${{1} \over {|z-\lambda|}}$ is uniformly bounded outside the set $\bigcup_{n \in \N} (B_{1/10}(v_n) \cup B_{1/10}(x_n))$. Thus, the sum of the integrals over the balls of radius $1/10$ centered at $x_n$ is bounded from above by $ C \|F\|_{\F_{1/w}}^2$, where the constant $C$ depends only on $Q$.\par
It remains to bound integrals over some small neighborhoods of points $u_n + \beta_n$. By the mean value theorem, one obtains
 $$\int_{B_{\frac{1}{4}}(u_n + \beta_n)}   {{|g_{\lambda}(z)|^2 \tilde{h}''(z)} \over {K(z)}} d\mu_2 \lesssim \int_{B_{\frac{1}{2}}(u_n + \beta_n) \setminus B_{\frac{1}{4}}(u_n + \beta_n)}  {{|g_{\lambda}(z)|^2 \tilde{h}''(z)} \over {K(z)}} d\mu_2,$$
while the poles of $S$ have the same order as the zeros of $F$. Hence,
$$\int_{\R} \int_{\R} {{|g_{\lambda}(z)|^2 \tilde{h}''(z)} \over {K(z)}} dx dy \lesssim  \|F\|_{\F_{1/w}}^2 + \sum_n \int_{B_{\frac{1}{4}}(u_n + \beta_n)}  {{|g_{\lambda}(z)|^2 \tilde{h}''(z)} \over {K(z)}} d\mu_2 \lesssim  $$
$$\lesssim \|F\|_{\F_{1/w}}^2 + \sum_n \int_{B_{\frac{1}{2}}(u_n + \beta_n) \setminus B_{\frac{1}{4}}(u_n + \beta_n)}  {{|g_{\lambda}(z)|^2 \tilde{h}''(z)} \over {K(z)}} d \mu_2 \lesssim \|F\|_{\F_{1/w}}^2 .$$
\end{proof}

\begin{lemma} \label{lem:tech} The following estimates hold:
\begin{enumerate}
\item $\left| {{G_1(z)} \over {S(z)}}\right| \leq \sqrt{1+|z|}$ in $\CC \setminus  \bigcup_{n\in \N} B_1(u_n+\beta_n)$;
\item $\left| {{G_1(z)} \over {S(z)}}  \right|\asymp \const$ in $\CC \setminus \bigcup_{n\in \N} B_{2\sqrt{u_n}}(u_n)$.
\end{enumerate}
\end{lemma}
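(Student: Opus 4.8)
The plan is to compare the canonical products $G_1$ and $S$ directly, exploiting the fact that their zero sets $\{v_n\}$ and $\{u_n+\beta_n\}$ are "almost the same" on the coarse scale: both $v_n$ and $u_n+\beta_n$ lie within a bounded distance of $u_n = 2^{n-1}Q$, and the $u_n$ form a lacunary (geometrically growing) sequence. For part (1) I would write
\[
\frac{G_1(z)}{S(z)} = \prod_{n\ge 1} \frac{1 - z/v_n}{1 - z/(u_n+\beta_n)} = \prod_{n\ge 1}\left(1 + \frac{z\,(v_n - u_n - \beta_n)}{v_n\,(z - u_n - \beta_n)}\right),
\]
and note that $|v_n - u_n - \beta_n|$ is bounded by an absolute constant (both points lie in $B_1(u_n) \cup B_2(u_n)$, say), while $v_n \asymp u_n = 2^{n-1}Q$. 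Thus the $n$-th factor is $1 + O\!\left(\frac{|z|}{u_n\,|z - u_n - \beta_n|}\right)$. Outside $\bigcup_n B_1(u_n+\beta_n)$ one has $|z - u_n - \beta_n| \gtrsim 1$ for every $n$, and also $|z - u_n - \beta_n| \gtrsim |z - u_n|$; combining these, the $n$-th factor differs from $1$ by at most $C \min\!\left(1, \frac{|z|}{u_n\,|z-u_n|}\right)$. Summing the logarithms: for a fixed $z$ with $|z| \asymp u_N$ for some $N$, the terms with $n$ far from $N$ contribute a convergent geometric-type sum (using lacunarity $u_n = 2^{n-1}u_1$), which is $O(1)$, while the finitely many terms with $n$ near $N$ each contribute at most $\log(C|z|)$, giving $\log|G_1/S| \le \tfrac12\log|z| + O(1)$, i.e. the bound $\sqrt{1+|z|}$ after adjusting constants.

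For part (2) the region $\CC \setminus \bigcup_n B_{2\sqrt{u_n}}(u_n)$ is much larger: there $|z - u_n| \gtrsim \sqrt{u_n}$, so also $|z - v_n| \gtrsim \sqrt{u_n} - 1 \gtrsim \sqrt{u_n}$ and $|z-u_n-\beta_n|\gtrsim\sqrt{u_n}$, hence the $n$-th factor in the product above is $1 + O\!\left(\frac{|z|}{u_n\sqrt{u_n}}\right)$ when $|z|\lesssim u_n$, and $1+O\!\left(\tfrac{1}{\sqrt{u_n}}\right)$ in general — one checks that in all cases the factor is $1 + O\!\left(u_n^{-1/2}\cdot\min(1,|z|/u_n)\right)$ or similar. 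The key point is that the correction is summable \emph{uniformly in $z$}: the terms with $u_n \gtrsim |z|$ give $\sum_{u_n\gtrsim|z|} |z|/u_n^{3/2} \lesssim |z|\cdot|z|^{-1/2} = |z|^{1/2}\cdot$(no, wait) — here I must be slightly more careful and instead use that $|z-u_n|\ge 2\sqrt{u_n}$ forces $|z|\not\asymp u_n$, so for each $z$ there is a "gap index" and the two geometric tails (over $u_n\ll|z|$ and over $u_n\gg|z|$) are each $O(1)$ with constant independent of $z$. Therefore $\log|G_1/S|$ is bounded above and below by absolute constants, which is the claim $|G_1/S|\asymp\const$.

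The main obstacle will be part (2): making the bound on $\log|G_1/S|$ \emph{uniform} over the unbounded region $\CC\setminus\bigcup_n B_{2\sqrt{u_n}}(u_n)$, rather than just locally. The subtlety is that this region is not simply "$|z|$ away from all $u_n$" — a point can have large modulus yet be close (on the scale $\sqrt{u_n}$) to no single $u_n$ while the nearest two $u_n$'s straddle it. I would handle this by splitting, for each $z$, the index set at $N(z) := \log_2(|z|/Q)$ and showing the head $\sum_{n < N(z)}$ and tail $\sum_{n > N(z)}$ of $\sum_n \log|1 + \cdots|$ are each bounded by a constant independent of $z$, using the lacunary growth of $u_n$ together with the separation $|z-u_n|\ge 2\sqrt{u_n}$; the two or three indices $n\approx N(z)$ are controlled directly by the same separation. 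One also uses that $\beta_n\to\tfrac12$ and $v_n\in B_1(u_n-\sqrt{u_n})$ only through the crude bounds $|v_n|\asymp u_n$, $|v_n-u_n-\beta_n|\lesssim\sqrt{u_n}$, so no fine information about $\beta_n$ is needed.
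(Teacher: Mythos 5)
Your overall strategy --- compare the two lacunary canonical products factor by factor and control the sum of logarithms by isolating the index nearest to $|z|$ --- is sound and is essentially an expanded version of what the paper compresses into two sentences. However, the quantitative heart of your part (1) is wrong as written. You assert that $|v_n-(u_n+\beta_n)|$ is bounded by an absolute constant; in fact $v_n\in B_1(u_n-\sqrt{u_n})$, so $|v_n-(u_n+\beta_n)|\asymp\sqrt{u_n}$, which is unbounded (your own closing paragraph states the correct bound $\lesssim\sqrt{u_n}$, contradicting the earlier claim). With the erroneous $O(1)$ bound, the $n$-th factor $1+\frac{z(v_n-u_n-\beta_n)}{v_n(z-u_n-\beta_n)}$ would be $1+O\bigl(\frac{|z|}{u_n|z-u_n-\beta_n|}\bigr)$, which for the near-critical indices ($u_n\asymp|z|$, $|z-u_n-\beta_n|\ge 1$) is merely $1+O(1)$; the whole product would then be $O(1)$, and nothing in your estimate produces the growth $\sqrt{1+|z|}$. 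The subsequent step ``the finitely many terms with $n$ near $N$ each contribute at most $\log(C|z|)$'' does not follow from your bound, and even if it did, $k$ such contributions would give $|z|^{k}$ rather than $|z|^{1/2}$.

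The exponent $1/2$ has a specific source that your write-up misses: for the unique $n$ with $z\in B_{2\sqrt{u_n}}(u_n)$ (these balls are pairwise disjoint by lacunarity), the corresponding single factor equals $\frac{u_n+\beta_n}{v_n}\cdot\frac{z-v_n}{z-(u_n+\beta_n)}$, whose numerator satisfies $|z-v_n|\le|z-u_n|+|u_n-v_n|\lesssim\sqrt{u_n}\asymp\sqrt{|z|}$ while the denominator is $\ge 1$ off $B_1(u_n+\beta_n)$; that one factor is what yields $\lesssim\sqrt{1+|z|}$ (up to a multiplicative constant, which the lemma's statement silently absorbs). This is exactly the paper's route: it first establishes part (2) --- the ratio is $\asymp\const$ off $\bigcup_{n}B_{2\sqrt{u_n}}(u_n)$, since $G_1$ and $S$ are lacunary canonical products whose $n$-th zeros differ by $O(\sqrt{u_n})=o(u_n)$ --- and then deduces part (1) inside each ball $B_{2\sqrt{u_n}}(u_n)$ by peeling off the factor $(z-v_n)/(z-u_n-\beta_n)$. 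Your part (2) argument (split at the gap index $N(z)$ and bound the two geometric tails using $|z-u_n|\ge 2\sqrt{u_n}$ together with $|v_n-(u_n+\beta_n)|\lesssim\sqrt{u_n}$) is fine once the correct distance bound is used throughout, but the derivation of part (1) must be rebuilt around the single critical factor.
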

\begin{proof} 
The second statement follows from the fact that both $G_1$ and $S$ are canonical lacunary products for $\Lambda_1$ and $\{u_n+\beta_n\}_{n \in \N}$ respectively. The first statement follows from the fact that $\bigcup_{n\in \N} B_1(u_n+\beta_n) \subset \bigcup_{n\in \N} B_{2\sqrt{u_n}}(u_n)$ and we have
$$0< c_1 \leq  \left| {{G_1(z)} \over {S(z)}} \right| \cdot  \left|  {{z- u_n - \beta_n} \over {z-v_n}}\right| \leq c_2 \quad \text{ for any } z \in B_{2\sqrt{u_n}}(u_n) \text{ and } n \ge 1$$
by continuity of both functions.
\end{proof}

\begin{lemma} \label{lem:uneq} We have $|(\sigma, g_{\lambda} )_{1/w} | \leq {{C} \over {\lambda}}$ for any $\lambda \in \Lambda$. 
\end{lemma}
\begin{proof}
We may assume $\lambda > 0$ using symmetry. It is important to note that $\tilde{h}''(z)=0$ inside the band $|\Im z | < {{\alpha} \over {2\pi}}$, so in integrals below we have ${{1} \over {|z - \lambda|}} \leq {{1} \over {|y|}} \leq C$, where the constant $C$ depends only on $Q$. The same observation legitimizes the use of the bound $\left| {{G_1(z)} \over {S(z)}} \right| \leq \sqrt{1+|z|}$ on $\{|\Im z| > \alpha/2\pi\}$ by Lemma \ref{lem:tech} and the local estimate from the proof of this Lemma.  \par
 We partition the domain of integration into two regions: the dominant region $\Omega_1 = \{z \in \CC \mid |z-\lambda| \ge \lambda/2\}$ and the subdominant region $\Omega_2 = \{z \in \CC \mid |z-\lambda| < \lambda/2\}$.

Inside $\Omega_1$ we have ${{1} \over {|z-\lambda|}} \le {{2} \over {\lambda}}$. Hence,
$$ \int_{\Omega_1} |\sigma(z) g_{\lambda}(z)| {{\tilde{h}''(z)} \over {K(z)}} d\mu_2 \le {{2} \over {\lambda}} \int_{\Omega_1} \left| \sigma(z) {{G_1(z)} \over {S(z)}} F(z) \right| {{\tilde{h}''(z)} \over {K(z)}} d\mu_2 \leq $$
$$ \leq {{2} \over {\lambda}} \left( \int_{\CC} |\sigma(z)|^2 \left|{{G_1(z)} \over {S(z)}}\right|^2 {{\tilde{h}''(z)} \over {K(z)}} d\mu_2 \right)^{1/2} \left( \int_{\CC} |F(z)|^2 {{\tilde{h}''(z)} \over {K(z)}} d\mu_2 \right)^{1/2}. $$
The second factor is simply $\|F\|_{\F_{1/w}} < \infty$. Recall that $|\sigma(z)| \asymp {{e^{\pi|y|}} \over {1+|z|^2}}$ and ${{\tilde{h}''(z)} \over {K(z)}} \asymp {{|y|^{\alpha-1}} \over {e^{2\pi|y|}}}$. Now we have
$$ |\sigma(z)|^2 \left|{{G_1(z)} \over {S(z)}}\right|^2 {{\tilde{h}''(z)} \over {K(z)}} \lesssim {{e^{2\pi|y|}} \over {(1+|z|^2)^2}} (1+|z|) {{|y|^{\alpha-1}} \over {e^{2\pi|y|}}} \lesssim {{|y|^{\alpha-1}} \over {(1+|x|^2+|y|^2)^{3/2}}}. $$
Integrating this bound over $x \in \R$ is asymptotically equivalent to $ (1+|y|^2)^{-1}$. Thus, the integral over $\Omega_1$ is bounded by $C_1 / \lambda$.

Inside $\Omega_2$ we have $|z| \ge \lambda/2$. Hence, 
$$ \int_{\Omega_2} |\sigma(z) g_{\lambda}(z)| {{\tilde{h}''(z)} \over {K(z)}} d\mu_2 \le \left( \int_{\Omega_2} |\sigma(z)|^2 {{\tilde{h}''(z)} \over {K(z)}} d\mu_2 \right)^{1/2} \|g_{\lambda}\|_{\F_{1/w}}. $$
By Lemma \ref{lem:in}, the norm $\|g_{\lambda}\|_{\F_{1/w}}$ is uniformly bounded for $\lambda \in \Lambda$. For the first integral we have $(1+|z|^2)^{-2} \lesssim \lambda^{-4}$, since $|z| \ge \lambda/2$. Thus
$$ \int_{\Omega_2} |\sigma(z)|^2 {{\tilde{h}''(z)} \over {K(z)}} d\mu_2 \lesssim \int_{\Omega_2} {{e^{2\pi|y|}} \over {(1+|z|^2)^2}} {{|y|^{\alpha-1}} \over {e^{2\pi|y|}}} d\mu_2 \lesssim {{1} \over {\lambda^4}} \int_{\Omega_2} |y|^{\alpha-1} d\mu_2. $$
The set $\Omega_2$ is contained in $[\lambda/2, 3\lambda/2] \times [-\lambda/2, \lambda/2]$. The integral over the planar measure (excluding the band $|y| < \alpha/2\pi$) is bounded by $\int_{\lambda/2}^{3\lambda/2} dx \int_{\alpha/2\pi}^{\lambda/2} y^{\alpha-1} dy \lesssim \lambda^{\alpha+1}$.
Therefore, the integral of $|\sigma|^2$ over $\Omega_2$ is bounded by $O(\lambda^{\alpha-3}) = o(1/\lambda)$, since $\alpha<1$.

Therefore, the contribution of $\Omega_2$ to $|(\sigma, g_{\lambda})_{1/w}|$ is at most
$$\big(\lambda^{\alpha-3}\big)^{1/2}\,\|g_{\lambda}\|_{\F_{1/w}} = O\!\big(\lambda^{(\alpha-3)/2}\big) = o(1/\lambda),$$
since ${{\alpha-3} \over {2}} < -1$. Combining the bounds over $\Omega_1$ and $\Omega_2$ we get the claim.

\end{proof}

\subsection{The system $\{\mathbb{K}_{\lambda}\}_{\lambda \in \Lambda}$ is complete and minimal}
The biorthogonal system $\{g_{\lambda}\}$ lies in $\F_{1/w}$ by Remark \ref{rem:ker}. It is enough to show that the function $G= G_1 G_2$ is the generating function of the system.

To show that $G$ is the generating function, it is sufficient to show that for any entire function $L$, if $L G \in \F_{1/w}$, then $L(z) = 0$ for all $z \in \CC$. \par 
Let $L$ be such an entire function, and consider the set
$$X = \bigcup_{n \in \N} B_{2\sqrt{u_n}}(u_n) \cup \bigcup_{a \in \Z \setminus\{0,1\}} B_{1/10}(a).$$
We have $|F(z)| \ge C(1+|z|)^{-2} e^{\pi |\Im z|} $ in $\CC \setminus X$ by the construction of $F$. 
Since $LG={{LG_1F} \over {S}} \in\F_{1/w}$ and $|G_1/S|\asymp\const$ on $\CC \setminus X$, we conclude that
$$|L(z)| \lesssim (1+|z|)^{2+(1-\alpha)/2} \text{ in } \CC \setminus X.$$
By standard arguments, $L$ must be a polynomial of degree at most 2. But for $z \in \bigcup_{n \in \N} B_{1/3}(u_n)$ we already have
$$|G(z)| = \left| {{G_1(z) F(z)} \over {S(z)}} \right| \ge C e^{\pi |y|}.$$
Since $L$ has at most two zeros, there exists an integer $m \in \N$ such that for any $n>m$, we have $|L(z)| \ge \const$ inside $B_{1/3}(u_n)$, and thus
$$|L(z)G(z)| \ge \const \cdot e^{\pi |y|} \text{ for any } n > m \text{ inside } B_{1/3}(u_n).$$
Now for $L \not\equiv 0$ we have 
$$\|LG\|^2_{1/w} \gtrsim \sum_n \int_{B_{r}(u_n)} |LG|^2 \tilde{h}''/K \gtrsim \sum_n |L(u_n)|^2 = \infty,$$
where $r>0$ is taken to be large enough for $\tilde{h}''>0$ in $B_r(u_n)$.
Hence, $LG$ cannot lie in $\F_{1/w}$ and the claim follows.

\subsection{For any $\lambda \in \Lambda_2$ we have $(F, \mathbb{K}_{\lambda})_{1/w} =0$.}
It is clear that $\Lambda_2 \subset \Ker F$ by the construction of $\Lambda_2$. So $(F, \mathbb{K}_{\lambda})_{1/w} = F(\lambda) = 0$ for any $\lambda \in \Lambda_2$.

\subsection{For any $\lambda \in \Lambda_1$ we have $(H, g_{\lambda})_{1/w} =0$.}
In this subsection we need to choose $d_n \in (-1,1)$ from equality (\ref{eq:H}) such that $(H, g_{\lambda})_{1/w} = 0$ for any $\lambda \in \Lambda_1$. Recall that $\Lambda_1 = \{v_n\}_{n\ge 1}$. 
Since the inner product $(\cdot,\cdot)_{1/w}$ is conjugate-symmetric and the numbers $d_m$, $u_m$ are real, the condition $(H,g_{v_n})_{1/w}=0$ is equivalent to $(g_{v_n},H)_{1/w}=0$, that is,
$$\frac{d_n}{\sqrt[3]{u_n}} (g_{v_n},\mathbb K_{u_n}) = -\left(g_{v_n},\frac{\sin\pi z}{z(z-1)}\right) - \sum_{m\ne n}\frac{d_m} {\sqrt[3]{u_m}}(g_{v_n},\mathbb K_{u_m}),$$
where the inner product is the one of $\F_{1/w}$. It is clear that
$$ |(g_{v_n}, \mathbb{K}_{u_n})| = {{1} \over {\|k_n\|}} \left| {{F(u_n) G_1(u_n)} \over {(u_n - v_n)S(u_n)}}\right|,$$
and $|u_n - v_n| \asymp \sqrt{u_n}$ by definition of $\{v_n\}_{n \in \N}$. Note that the global bound in Lemma \ref{lem:tech} is stated for $z$ outside the neighborhoods $B_{2\sqrt{u_n}}(u_n)$. However, as shown in the proof of Lemma \ref{lem:tech}, inside these neighborhoods we have the local estimate $\left| {{G_1(z)} \over {S(z)}} \right| \asymp \left| {{z - v_n} \over {z - u_n - \beta_n}} \right|$. Evaluating this precisely at the center $z=u_n$, we obtain
$$ \left| {{G_1(u_n)} \over {S(u_n)}} \right| \asymp {{|u_n - v_n|} \over {\beta_n}} \asymp |u_n - v_n|,$$
since $\beta_n \to 1/2$. Substituting this equivalence into the expression above, the term $|u_n - v_n|$ cancels out, leaving:
$$|(g_{v_n}, \mathbb{K}_{u_n})| \asymp {{|F(u_n)|} \over {\|k_n\|}} = |(F, \mathbb{K}_{u_n})|.$$
By the construction of $F$, we have $|(F, \mathbb{K}_{u_n})| \asymp {1 \over \sqrt{u_n}}$. Thus, we obtain
$$0< c_1< \sqrt{u_n} |(g_{v_n}, \mathbb{K}_{u_n}) | \leq c_2$$
for some constants $c_1, c_2$, which depend only on $Q$. Similarly for any $n, m \ge 1$ such that $n \not=m$ we have
$$ |(g_{v_n}, \mathbb{K}_{u_m})| \leq {{c_3} \over {|v_n -  u_m|}}.$$ 
Finally, 
$$ \left| \left(g_{v_n}, {{\sin \pi z} \over {z(z-1)}}\right) \right| \leq {{c_4} \over {v_n}} \leq {{2c_4} \over {u_n}}$$
by Lemma \ref{lem:uneq} for sufficiently large $n \in \N$. Note that $c_3$ and $c_4$ do not depend on $Q$. 

Now let $A = (a_{mn})_{n, m \in \N}$ be an infinite matrix with indices from $\mathbb{N}$ such that $a_{nn} = 1$ and $|a_{mn}| \leq {{C} \over {\max(u_n, u_m)^{1/6}}}$ outside the diagonal, where $C = \max(c_1,c_2,c_3,2c_4)$. Consider a column vector $D = (d_n)_{n\ge 1}$ and a column vector $\Gamma = (\gamma_n)_{n\ge 1}$, and let $|\gamma_n| \leq {{C} \over {u_n^{1/6}}}$ for any $n \in \N$. We will show that our system of equalities is equivalent to
$$DA = \Gamma,$$
for some matrices with the properties listed above. Indeed, for any $n \in \N$ we have
$$d_n + \sum_{m\ne n} d_m {{(g_{v_n},\mathbb K_{u_m})\,\sqrt[3]{u_n}}\over{\sqrt[3]{u_m}\,(g_{v_n},\mathbb K_{u_n})}} = -\left(g_{v_n},\frac{\sin\pi z}{z(z-1)}\right)\cdot {{\sqrt[3]{u_n}}\over{(g_{v_n},\mathbb K_{u_n})}}.$$

Denote the right-hand side by $\gamma_n$. Now we obtain
$$\left| - \left(g_{v_n}, {{\sin \pi z} \over {z(z-1)}}\right) \cdot {{\sqrt[3]{u_n}} \over {(g_{v_n}, \mathbb{K}_{u_n})}} \right| \leq {{C} \over {u_n}} \cdot u_n^{1/3 + 1/2} = {{C} \over {u_n^{1/6}}},$$
and the right-hand side is a product of the column $D$ and a matrix $A$ satisfying the required bound on its coefficients. Indeed, for large $Q$ we have $v_n \sim u_n - \sqrt{u_n} \ge {{3} \over {4}} u_n$ by lacunarity of $\{u_n\}$, since $v_n \in B_1(u_n - \sqrt{u_n})$. Hence,
$|v_n - u_m| \ge \tfrac14 \max(u_n, u_m)$ for all $m \ne n$. Combining this with $|(g_{v_n}, \mathbb{K}_{u_m})| \leq c_3 / |v_n - u_m|$ and $|(g_{v_n}, \mathbb{K}_{u_n})| \ge c_1 u_n^{-1/2}$, we obtain
$$|a_{mn}| = {{\sqrt[3]{u_n}} \over {\sqrt[3]{u_m}}}\,
{{|(g_{v_n}, \mathbb{K}_{u_m})|} \over {|(g_{v_n}, \mathbb{K}_{u_n})|}}
\leq {{4c_3} \over {c_1}}\cdot {{u_n^{5/6}} \over {u_m^{1/3}\,\max(u_n, u_m)}}
\leq {{c_2} \over {\max(u_n, u_m)^{1/6}}},$$
the last inequality by treating $m < n$ (then $\max = u_n$) and $m > n$ (then $\max = u_m$,
$u_n \le u_m$) separately.

Finally, the linear equations in the system $DA = \Gamma$ are solvable for some $d_n \in (-1,1)$ due to the decay rate of the coefficients established above, provided that $Q$ is sufficiently large (cf. \cite{BBB}).

\subsection{We have $(H, F)_{1/w} \ne 0$.}
It is sufficient to prove the following lemma.
\begin{lemma}\label{lem:ne0}There exists an absolute constant $C>0$ such that
$$\left\|F - {{\sin \pi z} \over {z(z-1)}}  \right\|  + \left\|H - {{\sin \pi z} \over {z(z-1)}} \right\|  \leq C Q^{-1/3}.$$
\end{lemma}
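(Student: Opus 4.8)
The plan is to estimate each of the two norms separately, using the explicit series representations \eqref{eq:F} and \eqref{eq:H} together with the norm formula $\|G\|_{\F_w}^2 = \int_\R\int_\R \frac{|G(x+iy)|^2}{K(y)}\tilde h''(y)\,dx\,dy$ and the asymptotics $K(y)\gtrsim e^{2|y|\pi}/|y|^{1+\alpha}$ (Lemma~\ref{asymp:K}) and $\tilde h''(y)\asymp 1/y^2$ for $|y|>\alpha/2\pi$ (formula \eqref{asymp:h}). Since $F - \sigma = \sum_{n\ge1} \frac{\mathbb K_{u_n} - \mathbb K_{u_n+1}}{\sqrt{u_n}}$ and $H - \sigma = \sum_n \frac{d_n}{\sqrt[3]{u_n}}\mathbb K_{u_n}$ with $|d_n|<1$, in both cases I am bounding the norm of a series of normed reproducing kernels with rapidly decaying coefficients, so the natural route is the triangle inequality combined with the fact that $\|\mathbb K_\lambda\| = 1$: this immediately gives $\|H - \sigma\| \le \sum_n u_n^{-1/3} = u_1^{-1/3}\sum_n 2^{-(n-1)/3} \lesssim Q^{-1/3}$, which is the desired bound for the $H$-term.

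For the $F$-term the crude triangle inequality only gives $\sum_n 2 u_n^{-1/2} \lesssim Q^{-1/2}$, which is already $\le CQ^{-1/3}$, so in fact the same elementary argument suffices; the only point requiring care is that $\|\mathbb K_{u_n} - \mathbb K_{u_n+1}\| \le \|\mathbb K_{u_n}\| + \|\mathbb K_{u_n+1}\| = 2$, which is all we need. Thus both estimates reduce to convergence of a geometric series in $Q^{-1/3}$ (resp. $Q^{-1/2}$), and summing gives the absolute constant $C$. If one wanted the sharper constant one could instead exploit cancellation in $\mathbb K_{u_n} - \mathbb K_{u_n+1}$ by writing it as an integral of $\partial_\lambda \mathbb K_\lambda$ and estimating $\|\partial_\lambda \mathbb K_\lambda\|$ via the norm formula, but this refinement is unnecessary here since the stated bound only asks for \emph{some} absolute constant.

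**The main obstacle** is purely bookkeeping: one must check that the normalization $\|k_\lambda\|_{\F_w} = \sqrt{K(\Im\lambda)} = \sqrt{K(0)}$ for the real points $\lambda = u_n, u_n+1$ is bounded and bounded away from zero (which follows since $K(0) = \int_{-\pi}^\pi e^{-2h(t)}\,dt = \int_{-\pi}^\pi w(t)^{-1}\,dt \in (0,\infty)$ by the assumptions $w, 1/w \in L^1$), so that $\mathbb K_{u_n}$ is genuinely well-defined with unit norm and the triangle-inequality bound is legitimate in $\F_w$. One also needs to confirm that $\sigma = \frac{\sin\pi z}{z(z-1)} \in \F_w$ (done in the Lemma preceding \eqref{eq:H}) so that $F - \sigma$ and $H-\sigma$ are bona fide elements of $\F_w$ to which the norm applies. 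Granting these routine facts, the proof is:
\begin{equation*}
\left\|F - \tfrac{\sin\pi z}{z(z-1)}\right\| \le \sum_{n\ge1}\frac{\|\mathbb K_{u_n}\| + \|\mathbb K_{u_n+1}\|}{\sqrt{u_n}} = \sum_{n\ge1}\frac{2}{\sqrt{2^{n-1}Q}} = \frac{2}{\sqrt Q}\cdot\frac{1}{1 - 2^{-1/2}} \lesssim Q^{-1/2},
\end{equation*}
\begin{equation*}
\left\|H - \tfrac{\sin\pi z}{z(z-1)}\right\| \le \sum_{n\ge1}\frac{|d_n|\,\|\mathbb K_{u_n}\|}{\sqrt[3]{u_n}} \le \sum_{n\ge1}\frac{1}{\sqrt[3]{2^{n-1}Q}} = \frac{1}{\sqrt[3]{Q}}\cdot\frac{1}{1 - 2^{-1/3}} \lesssim Q^{-1/3},
\end{equation*}
and adding the two, the slower decay $Q^{-1/3}$ dominates, so the sum is $\le C Q^{-1/3}$ for an absolute constant $C$. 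Consequently, choosing $Q$ large makes both $F$ and $H$ arbitrarily close to $\sigma$ in $\F_w$; since $\|\sigma\| > 0$, this forces $(F, H)_w$ close to $\|\sigma\|^2 \ne 0$, hence $(F,H)_w \ne 0$, completing the contradiction step.
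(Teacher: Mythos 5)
Your proof is correct and follows essentially the same route as the paper: the triangle inequality applied to the series for $F-\sigma$ and $H-\sigma$, the fact that the normed kernels satisfy $\|\mathbb{K}_{u_n}\|=1$ by definition, and summation of the resulting geometric series in $u_n=2^{n-1}Q$, with the $Q^{-1/3}$ term dominating. The extra checks you flag (well-definedness of $\mathbb{K}_\lambda$, membership of $\sigma$ in $\F_w$) and the closing remark about $(F,H)_w\ne 0$ are consistent with what the paper does before and after this lemma.
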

\begin{proof} By the construction of $F$ and $H$ we have:
$$
\left\|F- {{\sin \pi z} \over {z(z-1)}}  \right\|  + \left\|H - {{\sin \pi z} \over {z(z-1)}} \right\|  = \left\| \sum_{n=1}^{\infty} {{\mathbb{K}_{u_n} - \mathbb{K}_{u_n+1}} \over {\sqrt{u_n}}} \right\| + \left\| \sum_n {{d_n} \over {\sqrt[3]{u_n}}} \mathbb{K}_{u_n}\right \| \leq
$$
$$\leq \sum_{n=1}^{\infty} {{2} \over {u_n^{1/2}}}  +  \sum_{n=1}^{\infty} {{1} \over {u_n^{1/3}}} \leq {{12} \over {u_1^{1/3}}} = 6 Q^{-1/3}.$$
\end{proof}
Now for sufficiently large $Q$ we obtain $(H, F)_{1/w} \neq 0$ by Lemma \ref{lem:ne0}.

$\!$\par
$\!$\par
The author is a winner of the ``Leader'' competition conducted by the Foundation for the Advancement of Theoretical Physics and Mathematics ``BASIS'' and would like to thank its sponsors and jury. The research was partially supported by ``Native Towns'', a social investment program of PJSC ``Gazprom Neft''. The author is deeply grateful to scientific consultant Hakan Hedenmalm for invaluable discussions and guidance.

\end{document}